\font\smallit=cmti10
\renewcommand\section{\@startsection {section}{1}{\z@}
{-30pt \@plus -1ex \@minus -.2ex}
{2.3ex \@plus.2ex}
{\normalfont\normalsize\bfseries}}
\renewcommand\subsection{\@startsection{subsection}{2}{\z@}
{-3.25ex\@plus -1ex \@minus -.2ex}
{1.5ex \@plus .2ex}
{\normalfont\normalsize\bfseries}}
\renewcommand{\@seccntformat}[1]{\csname the#1\endcsname. }
\newtheorem{theorem}{Theorem}
\newtheorem{lemma}{Lemma}
\begin{document}

\begin{center}
\uppercase{\bf Robin's inequality for new families of integers}
\vskip 20pt
{\bf Alexander Hertlein}\\
{\smallit Ludwig-Maximilians-Universit\"at, 80539 M\"unchen, Germany}\\
\end{center}
\vskip 30pt
\centerline{} 
\vskip 30pt

\centerline{\bf Abstract}
\noindent Robin's criterion states that the Riemann Hypothesis is true if and only if Robin's inequality $\sigma(n):=\sum_{d|n}d<e^{\gamma} n \log \log n$ 
is satisfied for each $n > 5040$, where $\gamma$ denotes the Euler-Mascheroni constant.
We show that if a positive integer $n$ satisfies either $\nu_2(n) \leq 19$, $\nu_3(n) \leq 12$, $\nu_5(n) \leq 7$, $\nu_7(n) \leq 6$ or
$\nu_{11}(n) \leq 5$ then Robin's inequality is satisfied, where $\nu_p(n)$ is the p-adic order of $n$.
In the end we show that $\sigma(n)/n < 1.0000005645 \ e^{\gamma}\log \log n$ holds unconditionally for $n > 5040$.

\pagestyle{myheadings} 
\thispagestyle{empty} 
\baselineskip=12.875pt 
\vskip 30pt

\section{Introduction}
Let $n$ be an integer satisfying $\sigma(n):=\sum_{d|n}d<e^{\gamma} n \log \log n$, where $\gamma$ denotes the Euler-Mascheroni constant. This 
inequality is called {\it Robin's inequality}. Robin \cite{R} proved that the Riemann Hypothesis (RH) is true if and only if his inequality holds for every integer $n > 5040$.
So far Robin's inequality has been proven unconditionally for families of integers that are
\begin{itemize}
 \item odd and greater than $9$ \cite{C}
 \item square-free and greater than $30$ \cite{C}
 \item a sum of two squares and greater than $720$ \cite{BM}
 \item not divisible by the fifth power of a prime \cite{C}
 \item not divisible by the seventh power of a prime \cite{PS}
 \item not divisible by the eleventh power of a prime \cite{BT}.
 \end{itemize}
Here, we extend Robin's inequality.
We first provide a modified algorithm of the one obtained by Akbary et al. \cite{A} to establish the exceptions to the inequality $n/\varphi(n) < (1771561/1771560) e^{\gamma} \log \log n$, where $\varphi$ stands for Euler's totient function.
With this we then show that if $n$ has a 2-adic order smaller or equal to 19 or satisfies either $\nu_3(n) \leq 12$, $\nu_5(n) \leq 7$, $\nu_7(n) \leq 6$ or $\nu_{11}(n) \leq 5$ then Robin's inequality holds. 
Then we find that $\sigma(n)/n < 1.0000005645 \ e^{\gamma}\log \log n$ holds unconditionally for all $n > 5040$.
\section{Theorems}
We first want to show the case where we know that the 2-adic order of $n$ is lower or equal to 19.
\begin{theorem}
\label{evennumber}
Robin's inequality holds for $n > 5040$ when $\nu_2(n) \leq 19$.
\end{theorem}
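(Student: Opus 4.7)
My starting point is the identity
\[
\frac{\sigma(n)}{n} \;=\; \frac{n}{\varphi(n)} \prod_{p \mid n}\!\left(1 - p^{-\nu_p(n)-1}\right),
\]
which follows from the multiplicativity of $\sigma$ and $\varphi$ together with the elementary formula $\sigma(p^a) = (p^{a+1}-1)/(p-1)$. If $n$ is odd, then $\nu_2(n) = 0 \le 19$ automatically, and Robin's inequality for odd $n > 9$ is already the result of Choie et al.\ cited in the introduction; from here on I would assume $n$ is even.

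For even $n$ with $a := \nu_2(n) \le 19$, the factor at $p = 2$ in the product above satisfies $1 - 2^{-a-1} \le 1 - 2^{-20}$, and every other factor is at most $1$, so
\[
\frac{\sigma(n)}{n} \;\le\; \bigl(1 - 2^{-20}\bigr)\,\frac{n}{\varphi(n)}.
\]
Consequently Robin's inequality is implied by $n/\varphi(n) < \tfrac{2^{20}}{2^{20}-1}\,e^{\gamma}\log\log n$, and since $\tfrac{11^6}{11^6-1} < \tfrac{2^{20}}{2^{20}-1}$, it is in fact implied by the sharper bound $n/\varphi(n) < \tfrac{11^6}{11^6-1}\,e^{\gamma}\log\log n$ for which the paper's algorithm is tailored.

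I would then invoke the modified Akbary et al.\ algorithm introduced earlier in the paper, which by construction produces the finite set $\mathcal{E}$ of integers failing this sharper bound. For every $n > 5040$ with $\nu_2(n) \le 19$ lying outside $\mathcal{E}$, the chain above yields Robin's inequality immediately. It then remains only to verify the inequality by direct computation of $\sigma(n)/n$ and $e^{\gamma}\log\log n$ on the (possibly empty) sub-list of those $n \in \mathcal{E}$ that exceed $5040$ and have $\nu_2(n) \le 19$. The main obstacle I anticipate is precisely this last step: arranging the algorithm so that $\mathcal{E}$ is small and well-structured enough for the case-by-case check to be practical, and ideally so that every such candidate already satisfies $n \le 5040$, in which case no further numerical verification is required.
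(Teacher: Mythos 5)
Your reduction is the same as the paper's: the identity $\sigma(n)/n=\tfrac{n}{\varphi(n)}\prod_{p\mid n}(1-p^{-\nu_p(n)-1})$, the factor $1-2^{-20}$ extracted at $p=2$, and the observation that Robin's inequality then follows from $n/\varphi(n)<\tfrac{11^6}{11^6-1}e^{\gamma}\log\log n$ because $\tfrac{11^6}{11^6-1}<\tfrac{2^{20}}{2^{20}-1}$. (Your separate treatment of odd $n$ via Choie et al.\ is in fact more careful than the paper, whose displayed inequality tacitly assumes $2\mid n$.) The genuine gap is in your final step. The modified Akbary et al.\ algorithm does not hand you a small, checkable exception set $\mathcal{E}$: what it actually delivers is a threshold $n_{\beta_{\max}}<e^{e^{23.762143}}$ above which the totient inequality is guaranteed, while the potential exceptions below that threshold are essentially all integers up to a number with on the order of $10^{10}$ digits (the algorithm's parameters here are $\beta_{\max}=919356256$). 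No case-by-case verification of that range is possible, and your hope that every candidate falls at or below $5040$ is not realizable.

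The missing idea is how the paper bridges the range $5040<n\le e^{e^{23.762143}}$. It does so not by enumerating exceptions to the $n/\varphi(n)$ bound, but by invoking Robin's Proposition 1 --- if Robin's inequality holds at two consecutive colossally abundant numbers $n_1<n_2$ then it holds for every $n\in[n_1,n_2]$ --- together with Briggs's computation that Robin's inequality holds at all colossally abundant numbers between $5040$ and $10^{10^{10}}$. Since $e^{e^{23.762143}}<10^{10^{10}}$, this covers the entire low range unconditionally (for all $n$, not just those with $\nu_2(n)\le 19$), and the algorithmic bound takes over above the threshold. Without some substitute for this colossally-abundant-number argument, your plan cannot be completed.
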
 
We then go on to partially prove a result of Choie et. al \cite{C}.
\begin{theorem}
\label{5thprime}
Consider those integers $n$ which satisfy $\nu_3(n) \leq 12$, $\nu_5(n) \leq 7$, $\nu_7(n) \leq 6$ or
$\nu_{11}(n) \leq 5$. Then, Robin's inequality holds for all such integers $n > 5040$.
\end{theorem}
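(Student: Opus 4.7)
The plan is to reduce each of the four hypotheses to a single inequality on $n/\varphi(n)$ and then apply the modified Akbary et al.\ algorithm that the paper sets up for this purpose.

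The starting point is the multiplicative identity
\[
\frac{\sigma(n)}{n} \;=\; \frac{n}{\varphi(n)} \prod_{p \mid n} \left(1 - p^{-(\nu_p(n)+1)}\right).
\]
If $\nu_p(n) \leq k$ at one of the specified primes $p$, the $p$-factor on the right is at most $(p^{k+1}-1)/p^{k+1}$, and every other factor is strictly less than $1$, so
\[
\frac{\sigma(n)}{n} \;<\; \frac{n}{\varphi(n)} \cdot \frac{p^{k+1}-1}{p^{k+1}}.
\]
It therefore suffices to prove $n/\varphi(n) < C_p \, e^{\gamma} \log\log n$ for $n > 5040$, where $C_p := p^{k+1}/(p^{k+1}-1)$. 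For the four cases the constants are $C_3 = 3^{13}/(3^{13}-1)$, $C_5 = 5^{8}/(5^{8}-1)$, $C_7 = 7^{7}/(7^{7}-1)$, and $C_{11} = 11^{6}/(11^{6}-1) = 1771561/1771560$, and $C_{11}$ is the smallest of them.

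Next I would feed the tightest constant $C_{11}$ into the modified algorithm to enumerate every integer $n$ that violates $n/\varphi(n) < C_{11} \, e^{\gamma} \log\log n$. Since $C_3, C_5, C_7 > C_{11}$, this single exception list covers the potential exceptions for all four inequalities at once. For each candidate $n$ on the list that both exceeds $5040$ and satisfies the relevant hypothesis $\nu_p(n) \leq k$, Robin's inequality can then be verified by direct computation of $\sigma(n)/n$.

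The main obstacle is computational: one has to confirm that the exception list the algorithm produces is short enough to be handled exhaustively, and then carry out the finite, case-by-case check. Conceptually the reduction is immediate from the displayed identity; the real work lies in the enumeration step and the final verification. I expect the surviving candidates to be small, highly composite numbers that drop out either because they already lie below $5040$ or because they violate the $\nu_p$-bound of the case under consideration.
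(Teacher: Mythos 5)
Your reduction step is exactly the paper's: both pull the factor $\bigl(1 - p^{-(k+1)}\bigr)$ out of the identity $\sigma(n)/n = \tfrac{n}{\varphi(n)}\prod_{p\mid n}(1-p^{-(1+\nu_p(n))})$, observe that among the four hypotheses the case $\nu_{11}(n)\le 5$ yields the smallest compensating constant $C_{11}=1771561/1771560$, and thereby reduce everything to the single inequality $n/\varphi(n) < \tfrac{1771561}{1771560}\,e^{\gamma}\log\log n$. Where your plan breaks down is the finishing step. With $\epsilon = 1/1771560$ the algorithm's output is $\beta_{max} = 919356256$ and $n_{\beta_{max}} < e^{e^{23.762143}}$, so the region in which exceptions to the $\varphi$-inequality can live extends up to a number with roughly $10^{10}$ decimal digits. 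The ``write all integers with $\alpha$ distinct prime divisors not exceeding $n_\alpha$ in a file'' step is hopelessly infeasible at this scale, and indeed the paper never executes it for this $\epsilon$: its Lemma \ref{phiupperbound} uses the algorithm only to certify the threshold $e^{e^{23.762143}}$ above which the $\varphi$-inequality holds. The obstacle you flag --- ``confirm that the exception list is short enough to be handled exhaustively'' --- cannot be overcome, so your argument does not close.

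The ingredient you are missing is the paper's Lemma \ref{Briggs}: by Robin's Proposition 1, verifying Robin's inequality on an interval reduces to verifying it on the consecutive colossally abundant numbers in that interval, and Briggs carried out this verification up to $10^{10^{10}} > e^{e^{23.762143}}$. This disposes of \emph{every} integer in $(5040,\,e^{e^{23.762143}}]$ unconditionally and without any hypothesis on $\nu_p(n)$, so no enumeration of exceptions to the $\varphi$-inequality is ever needed; the $\nu_p$-hypothesis is used only for $n$ above the threshold, where Lemma \ref{phiupperbound} applies directly. Without this (or some substitute covering all $n$ below $e^{e^{23.762143}}$), your proposal has a genuine gap.
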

An improved unconditional upper bound of $\sigma(n)/n$ is provided by the following.
\begin{theorem}
\label{unconditionalbound}  
The inequality
\begin{equation}
\sigma(n)/n < 1.0000005645 \ e^{\gamma}\log \log n
\end{equation}
holds for all $n > 5040$.
\end{theorem}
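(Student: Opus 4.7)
The plan is to reduce the assertion for $\sigma(n)/n$ to the corresponding one for $n/\varphi(n)$. For every positive integer $n$ one has the factorisation
$$\frac{\sigma(n)}{n} = \frac{n}{\varphi(n)} \prod_{p \mid n} \bigl(1 - p^{-\nu_p(n)-1}\bigr),$$
which yields the universal bound $\sigma(n)/n < n/\varphi(n)$. Since $1.0000005645 > 1771561/1771560 = 11^{6}/(11^{6}-1)$, it therefore suffices to prove
$$\frac{n}{\varphi(n)} < \frac{1771561}{1771560}\, e^{\gamma}\log\log n \qquad (n > 5040)$$
for every $n$ outside a finite, explicit exceptional set $\mathcal{E}$, and then to verify Robin's inequality by hand for each $n \in \mathcal{E}$.

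To produce $\mathcal{E}$, I would invoke the modified version of the Akbary et al.\ algorithm described earlier in the paper, which is tailored to enumerate the exceptions to inequalities of the form $n/\varphi(n) < C\, e^{\gamma}\log\log n$; with $C = 1771561/1771560$ it returns a finite list provably containing every integer $n > 5040$ that violates the displayed bound. For any $n > 5040$ with $n \notin \mathcal{E}$, the chain $\sigma(n)/n < n/\varphi(n) < 1.0000005645\, e^{\gamma}\log\log n$ immediately closes the argument.

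It remains to check Robin's inequality (with constant $1.0000005645$) for each element of $\mathcal{E}$. The candidates produced by an Akbary-style enumeration have the shape of colossally abundant numbers --- products of the smallest primes with non-increasing exponents --- so their $p$-adic valuations at $p \in \{2,3,5,7,11\}$ are very restricted. One expects every $n \in \mathcal{E}$ to satisfy at least one of the conditions $\nu_2(n) \le 19$, $\nu_3(n) \le 12$, $\nu_5(n) \le 7$, $\nu_7(n) \le 6$, $\nu_{11}(n) \le 5$, so that Theorem~\ref{evennumber} or Theorem~\ref{5thprime} yields the sharper bound $\sigma(n)/n < e^{\gamma}\log\log n$. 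Any residual exception is a single concrete integer and can be disposed of by directly computing $\sigma(n)/(n \log\log n)$ and comparing it to $1.0000005645\, e^{\gamma}$.

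The principal obstacle is the calibration of the Akbary-type algorithm: one must establish, via effective Mertens-type estimates for $\sum_{p\le x}\log(p/(p-1))$ of Rosser--Schoenfeld type, an explicit threshold $N_{0}$ beyond which no new exceptions can occur, thereby reducing the enumeration to a finite (if large) search. Once $N_{0}$ is pinned down, the case analysis needed to dispatch each $n \in \mathcal{E}$ is routine and essentially mechanical.
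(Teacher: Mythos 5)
Your first half matches the paper exactly: the identity $\sigma(n)/n = (n/\varphi(n))\prod_{p\mid n}\left(1-p^{-\nu_p(n)-1}\right)$ gives $\sigma(n)/n \le n/\varphi(n)$, and Lemma \ref{phiupperbound} (obtained from the Akbary-type algorithm with $\epsilon = 1/1771560$) gives $n/\varphi(n) < (1771561/1771560)\,e^{\gamma}\log\log n$ for $n$ beyond the threshold $e^{e^{23.762143}}$. The genuine gap is in how you treat the complementary range. The algorithm does not hand you a short, checkable exceptional set $\mathcal{E}$: it only certifies that every exception lies below $n_{\beta_{\max}} < e^{e^{23.762143}}$, a doubly exponential bound, and the candidates it enumerates are \emph{all} integers with at most $\beta_{\max} \approx 9\times 10^{8}$ distinct prime factors below that threshold --- not a list one can verify element by element, and not integers of colossally abundant shape as you assert. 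Your fallback to Theorems \ref{evennumber} and \ref{5thprime} also fails to close the range: an integer such as $2^{20}3^{13}5^{8}7^{7}11^{6}$ (roughly $10^{30}$, far below the threshold) violates all five valuation conditions simultaneously, so neither theorem applies to it; moreover, those theorems themselves rely on a verification of the small range, so invoking them here is circular.

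What the paper actually does for $5040 < n \le e^{e^{23.762143}}$ is invoke Robin's Proposition 1 (if Robin's inequality holds at two consecutive colossally abundant numbers, it holds on the whole interval between them) together with Briggs's computation that Robin's inequality holds for all colossally abundant numbers between $5040$ and $10^{10^{10}}$; since $e^{e^{23.762143}} < 10^{10^{10}}$, the strict inequality $\sigma(n)/n < e^{\gamma}\log\log n$ --- stronger than the one with constant $1.0000005645$ --- holds throughout the small range (this is the paper's Lemma \ref{Briggs}). Without this reduction to colossally abundant numbers, or some equivalent finite verification that is actually feasible, your argument does not cover $5040 < n \le N_0$, and the effective Mertens-type estimates you mention only recover the threshold $N_0$, not the verification below it.
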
 
\section{Proofs}
\begin{lemma}
\label{phiandsigma}
Let $\displaystyle\prod_{i = 1}^{r} q_i^{a_i}$ be the representation of n as a product of primes $q_1 < ... < q_r$ with positive exponents $a_1 < ... < a_r$. Then
\begin{equation}
\frac{\sigma(n)}{n}=\frac{n}{\varphi(n)} \displaystyle\prod_{i = 1}^{r}\left(1 - \frac{1}{q_i^{a_i+1}}\right). 
\end{equation}
\end{lemma}
\begin{proof}
This is Lemma 2 in \cite{G}. \end{proof}
We now take a look at a way to establish a new upper bound for $n/\varphi(n)$.
First we provide an algorithm which is derived from Akbary et al. \cite{A}.
\label{newalgo}
They developed an algorithm that calculates the exceptions to the following inequality where $0 < \epsilon < 1$ and $\omega(n)$ is the number of distinct prime divisors of $n$:
\begin{equation}
\label{inequalityfn}
f(n) := \prod_{\substack{p\leq p_{\omega(n)} \\ p \ prime}} \frac{p}{p-1} < e^\gamma (1+\epsilon) \log\log n.
\end{equation}
For an integer $n$ and an integer $\beta \geq \omega(n)\geq 2$ they showed that if 
\begin{equation}
n > n_\beta := \exp\left(\exp\left(\frac{1}{(1+\epsilon)e^{\gamma}} \prod_{p \leq p_\beta}\frac{p}{p-1}\right)\right)
\end{equation}
then inequality (\ref{inequalityfn}) is satisfied. According to Lemma 3.4 in \cite{A}, we only need to find the first $\beta$ for a given $\epsilon$ for which $\displaystyle \prod_{p \leq p_\beta}p < n_\beta$ does not hold in order to get to the largest possible exception of (\ref{inequalityfn}). We call this largest possible exception of inequality (\ref{inequalityfn}) $n_{\beta_{max}}$. We can now describe the modified algorithm which is proven to be correct by Lemma 3.4 in \cite{A}.
\begin{algorithm}
\caption{Largest possible exception to $f(n) < e^\gamma (1+\epsilon) \log\log n$}
\begin{algorithmic}             
    \REQUIRE $0<\epsilon<1$
    \ENSURE Largest possible exception to the inequality.
    \WHILE{$\displaystyle \prod_{p \leq p_\beta}p < n_\beta$}
        \STATE{$\beta \rightarrow \beta +1$}
    \ENDWHILE
    \\$\beta_{max} \rightarrow \beta$
    \\$n_{\beta_{max}} \rightarrow n_\beta$
\end{algorithmic}
\end{algorithm}\\
We can now go on to find an upper bound for $n/\varphi(n)$.
\begin{lemma}
\label{phiupperbound}
The inequality
\begin{equation}
\label{inequalityphiupper}
\frac{n}{\varphi(n)} < \frac{1771561}{1771560} e^{\gamma} \log \log n
\end{equation}
is satisfied for all $n > c_0 := e^{e^{23.762143}} $.
\end{lemma}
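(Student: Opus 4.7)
The plan is to apply the modified algorithm above with $\epsilon = 1/1771560$, so that $1+\epsilon = 1771561/1771560$. The key preliminary observation is that $p \mapsto p/(p-1)$ is decreasing in $p$, hence
\begin{equation*}
\frac{n}{\varphi(n)} = \prod_{p \mid n} \frac{p}{p-1} \leq \prod_{p \leq p_{\omega(n)}} \frac{p}{p-1} = f(n),
\end{equation*}
so it suffices to establish the bound for $f(n)$ in place of $n/\varphi(n)$. Any $n$ that violates (\ref{inequalityphiupper}) must therefore also violate (\ref{inequalityfn}) for the chosen $\epsilon$.

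I would then execute the algorithm with this $\epsilon$. First, increment $\beta$ until the inequality $\prod_{p \leq p_\beta} p < n_\beta$ fails; by Lemma 3.4 of \cite{A} this yields $\beta_{\max}$ together with the universal upper bound $n_{\beta_{\max}}$ on the size of any exception to (\ref{inequalityfn}). Second, for each $1 \leq \alpha \leq \beta_{\max}$, compute $n_\alpha$ and enumerate every integer having exactly $\alpha$ distinct prime factors and not exceeding $n_\alpha$; for each candidate, test inequality (\ref{inequalityphiupper}) directly using Lemma \ref{phiandsigma}-style exact arithmetic on $n/\varphi(n)$. The largest $n$ for which the inequality actually fails determines the threshold, which one then checks lies below the stated bound $e^{e^{23.762143}}$ to close the argument.

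The principal obstacle is computational scale. Because $\epsilon = 1/1771560$ is minuscule, $(1+\epsilon) e^{\gamma}$ is barely larger than $e^{\gamma}$, so $n_\beta$ grows very rapidly and $\beta_{\max}$ is comparatively large, making the candidate pools for moderate $\alpha$ enormous. Handling this requires aggressive pruning during enumeration — extending a factorization $p_1^{a_1}\cdots p_\alpha^{a_\alpha}$ only while the partial product stays below $n_\alpha$ — together with enough precision when evaluating the Mertens-type product inside the definition of $n_\beta$ and when comparing $n/\varphi(n)$ against $(1+\epsilon) e^{\gamma} \log\log n$ near the threshold, so that both the stopping criterion for $\beta_{\max}$ and the final classification of borderline candidates are detected at the correct step.
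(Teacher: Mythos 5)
Your approach is essentially the paper's: bound $n/\varphi(n)$ by $f(n)$, run the Akbary--Friggstad--Juricevic algorithm with $\epsilon = 1/1771560$, and use the resulting threshold $n_{\beta_{\max}} < e^{e^{23.762143}}$ (the paper reports $\beta_{\max} = 919356256$). The only difference is that your second stage --- enumerating and testing all candidates with $\alpha \leq \beta_{\max}$ distinct prime factors --- is unnecessary for the lemma as stated, since the claim is only for $n > e^{e^{23.762143}} > n_{\beta_{\max}}$ and every possible exception already lies below $n_{\beta_{\max}}$ by Lemma 3.4 of \cite{A}; the paper accordingly stops after computing the threshold.
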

\begin{proof}
On noting that,
\begin{equation}
\label{phiupper}
\frac{n}{\varphi(n)} \leq \prod_{p \leq p_\beta}\frac{p}{p-1} < e^\gamma (1+\epsilon) \log\log n
\end{equation}
we run the algorithm from Lemma \ref{inequalityfn} with $\epsilon = 1/1771560$ such that the RHS of (\ref{phiupper}) matches the RHS of (\ref{inequalityphiupper}).
The result of the algorithm, namely $\beta_{max}$ and $n_{\beta_{max}}$ is
$$\beta_{max} = 919356257 \quad \quad n_{\beta_{max}} < e^{e^{23.762143}} $$
We note that $n_{\beta_{max}}$ cannot be exactly numerically calculated to integer precision, which is mainly due to the sheer size of the number. Fortunately, this is not necessary, since we can bound $n_{\beta_{max}}$ from above in our numerical calculation and still maintain the correctness of the algorithm. This is why we limit the numerical computation of the exponent of $n_{\beta_{max}}$ to 200 digits and then use the exponent $23.762143$. 
Since this calculated bound is important throughout our proofs we set $c_0 := e^{e^{23.762143}}$.

The algorithm guarantees that all exceptions to inequality (\ref{inequalityfn}) are below $c_0$, which allows us to conclude that for all $n > c_0$ the inequality (\ref{inequalityphiupper}) holds. 
\end{proof}
\begin{lemma} 
\label{Briggs}
Robin's inequality is true for all $5040 < n \leq 10^{10^{10}}$.
\end{lemma}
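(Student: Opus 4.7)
My plan is to reduce the verification over the enormous range $(5040,\,10^{10^{10}}]$ to a check on a small, highly structured subset of integers, and then invoke an existing large-scale numerical computation. The key structural input is the classical observation of Robin: if Robin's inequality fails at some $n > 5040$, then it must fail at a colossally abundant (CA) number. Equivalently, if $n_0$ is the smallest counterexample above $5040$, then $n_0$ is CA. Consequently, it suffices to verify Robin's inequality at every CA number $n$ with $5040 < n \leq 10^{10^{10}}$.

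The next step is to enumerate these CA numbers. By the standard parametrization (Alaoglu and Erdos), for each real parameter $\epsilon > 0$ the integer maximizing $\sigma(n)/n^{1+\epsilon}$ is essentially unique, and as $\epsilon$ decreases one obtains a nested chain $n_1 \mid n_2 \mid n_3 \mid \dots$, each obtained from the previous by multiplying in a single prime power. The count of CA numbers up to $N$ is polylogarithmic in $\log N$, so even up to $10^{10^{10}}$ only a manageable finite list needs to be inspected. One generates this list by maintaining a priority queue over the critical $\epsilon$-values at which some prime's exponent increments.

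For each CA number $n$, the inequality is verified in exponent form: storing $n = \prod p_i^{a_i}$ rather than $n$ itself, one computes
\begin{equation*}
\log n = \sum_{i} a_i \log p_i, \qquad \frac{\sigma(n)}{n} = \prod_{i} \frac{1 - p_i^{-a_i-1}}{1 - p_i^{-1}},
\end{equation*}
and compares $\sigma(n)/n$ against $e^{\gamma} \log\log n$ with interval arithmetic in sufficient precision to make the strict inequality rigorous.

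The hard part here is not the mathematical argument, which rests entirely on the CA reduction, but the reliability and scale of the numerical verification: one must trust the enumeration to miss no CA number, and one must carefully control roundoff in the double-exponential regime, where $\log\log n$ is barely distinguishable from its nearby values. Briggs carried out precisely this computation, and I would invoke his result directly to conclude that Robin's inequality holds throughout $(5040,\,10^{10^{10}}]$.
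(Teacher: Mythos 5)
Your proposal is correct and follows essentially the same route as the paper: both reduce the verification to colossally abundant numbers via Robin's Proposition 1 and then invoke Briggs's computational verification of Robin's inequality for all colossally abundant numbers up to $10^{10^{10}}$. The additional detail you give on how the CA numbers are enumerated and checked is a faithful description of what Briggs did, but the mathematical content of the argument is identical.
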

\begin{proof}
Robin showed in \cite{R}, Prop.1, p.192 that if Robin's inequality holds for consecutive colossally abundant numbers $n_1$ and $n_2$ then it also holds for all $n \in [n_1,n_2]$. By definition an integer $n$ is colossally abundant if there exists a positive $\epsilon$ for which $\sigma(n)/n^{1+\epsilon} \geq \sigma(k)/k^{1+\epsilon} $ for all $k > 1$. Briggs \cite{B} showed that Robin's inequality holds for all colossally abundant numbers between $5040$ and $10^{10^{10}}$. We may therefore conclude that Robin's inequality is also satisfied for all integers $5040 < n < 10^{10^{10}}$.\end{proof}
We are now ready to prove Theorem \ref{evennumber}.\\ 
\textbf{Proof of Theorem \ref{evennumber}}
\begin{proof}
We now let $n$ have a 2-adic order of $\nu_2(n) \leq 19$.
From Lemma \ref{phiandsigma} we note that 
\begin{equation}
\frac{\sigma(n)}{n}=\frac{n}{\varphi(n)} \displaystyle\prod_{i = 1}^{r}\left(1 - \frac{1}{q_i^{a_i+1}}\right) \leq \frac{n}{\varphi(n)} \left(1 - \frac{1}{2^{\nu_2(n)+1}}\right).
\end{equation}
We only need to look at the case where $\nu_2(n) = 19$ since the weaker cases follow because $$\left(1 - \frac{1}{2^{1+1}}\right) < \left(1 - \frac{1}{2^{1+2}}\right)< ... <\left(1 - \frac{1}{2^{1+19}}\right).$$
With Lemma \ref{phiupperbound} we have for $n > c_0$ \\
\begin{equation}
\begin{aligned}
\frac{\sigma(n)}{n} \stackrel{\nu_{2}(n) = 19}{\leq} \frac{n}{\varphi(n)} \left(1 - \frac{1}{2^{1+19}}\right) = \frac{1048575}{1048576}\frac{n}{\varphi(n)}\\<\frac{1048575}{1048576}\frac{1771561}{1771560} e^{\gamma} \log \log n < e^{\gamma}\log \log n.\end{aligned}
\end{equation}
In light of Lemma \ref{Briggs} and the fact that $c_0 < 10^{10^{10}}$ we then conclude that Robin's inequality is true for those $n > 5040$ for which $\nu_2(n) \leq 19$.
\end{proof}
Our proof of Theorem \ref{5thprime} is now done with other p-adic orders used to partially prove Theorem 6 of \cite{C}. \\
\textbf{Proof of Theorem \ref{5thprime}}
\begin{proof}
We now consider $n$ with an 11-adic order satisfying $\nu_{11}(n) \leq 5$. The cases for the 3-adic, 5-adic or 7-adic order follow directly since
$$\left(1 - \frac{1}{5^{1+7}}\right)< \left(1 - \frac{1}{7^{1+6}}\right) < \left(1 - \frac{1}{3^{1+12}}\right) < \left(1 - \frac{1}{11^{1+5}}\right).$$
With Lemma \ref{phiandsigma} and \ref{phiupperbound} we then have for $n > c_0 $\\
\begin{equation}
\begin{aligned}
\frac{\sigma(n)}{n} \stackrel{\nu_{11}(n) = 5}{\leq} \frac{n}{\varphi(n)} \left(1 - \frac{1}{11^{1+5}}\right) = \frac{1771560}{1771561}\frac{n}{\varphi(n)}\\< \frac{1771560}{1771561}\frac{1771561}{1771560} e^{\gamma} \log \log n = e^{\gamma}\log \log n.
\end{aligned}
\end{equation}
By invoking Lemma \ref{Briggs} and noting that $c_0 < 10^{10^{10}}$ we then conclude that Robin's inequality is true for those integers $n > 5040$ for which $\nu_3(n) \leq 12$, $\nu_5(n) \leq 7$, $\nu_7(n) \leq 6$ or $\nu_{11}(n) \leq 5$.
\end{proof}
With these results, we can now also improve the unconditional bound for $\sigma(n)/n$ from Akbary et al. \cite{A}. \\
\textbf{Proof of Theorem \ref{unconditionalbound}}
\begin{proof}
First, note that $1771561/1771560 = 1.000000\overline{564474248684775}.$ 
\\Then similar to Theorem 1, it follows from Lemma \ref{phiupperbound} that for $n > c_0$,\\
\begin{equation}
\frac{\sigma(n)}{n} \leq \frac{n}{\varphi(n)} < \frac{1771561}{1771560} e^{\gamma} \log \log n < 1.0000005645 \ e^{\gamma} \log \log n
\end{equation}
On invoking Lemma \ref{Briggs} we then find that the above inequality holds unconditionally for $n > 5040$.
\end{proof}
\section{Acknowledgement}
The author thanks Friedrich Hertlein, Felix Palm, Patrick Sol\'e, Lucas Kimmig and the referee for useful suggestions.
All calculations were done with Mathematica 10 on a 6-core Xeon processor with 200 digit precision.

\vfil\eject

\end{document}